\newcommand{\id}{\operatorname{id}}
\newcommand{\SL}{\operatorname{SL}}
\newcommand{\End}{\operatorname{End}}
\newcommand{\Sp}{\operatorname{Sp}}
\def\F{\mathbb{F}}
\def\R{\mathbb{R}}
\def\C{\mathbb{C}}
\def\H{\mathbb{H}}
\def\calA{\mathcal{A}}
\def\calU{\mathcal{U}}
\def\lcro{\mathopen{[\![}}
\def\rcro{\mathclose{]\!]}}
\theoremstyle{definition}
\theoremstyle{plain}
\newtheorem{theo}{Theorem}
\newtheorem{prop}[theo]{Proposition}
\newtheorem{lemma}[theo]{Lemma}
\theoremstyle{plain}
\theoremstyle{remark}
\title{A note on bireflectional elements of an algebra}
\author{Cl\'ement de Seguins Pazzis\footnote{Universit\'e de Versailles Saint-Quentin-en-Yvelines, Laboratoire de Math\'ematiques
de Versailles, 45 avenue des Etats-Unis, 78035 Versailles cedex, France}
\footnote{e-mail address: dsp.prof@gmail.com}}
\begin{document}

\thispagestyle{plain}

\maketitle

\begin{abstract}
A classical theorem of Wonenburger, Djokovi\v c, Hoffmann and Paige \cite{Djokovic,HoffmanPaige,Wonenburger} states that an element of the general linear group of a finite-dimensional vector space is the product of two involutions if and only if it is similar to its inverse.
We give a very elementary proof of this result when the underlying field $\F$ is algebraically closed with characteristic other than $2$. In that situation, the
result is generalized to the group of invertibles of any finite-dimensional algebra over $\F$.
\end{abstract}

\vskip 2mm
\noindent
\emph{AMS Classification:} 15A23; 16W10

\vskip 2mm
\noindent
\emph{Keywords:} Decomposition, Algebraically closed fields, Involutions, Square roots.


\section{Bireflectional elements in an algebra over a field}

An element $g$ of a group $G$ is called bireflectional whenever $g=ab$ for involutions $a$ and $b$ of $G$, i.e.\ elements that satisfy $a^2=b^2=1_G$.
In that case, $g^{-1}=ba=bgb^{-1}$, so $g^{-1}$ is conjugated to $g$, and better still it is conjugated to $g$ through an involution.
And conversely if there exists an involution $b \in G$ such that $g^{-1}=bgb^{-1}$, then $gb$ is an involution and $g=(gb)b$ is bireflectional.

It is not true in general that an element of $G$ that is conjugated to its inverse is bireflectional. A standard example is the special linear group $\SL_2(\C)$,
in which every element is conjugated to its inverse although the only bireflectional elements are $\pm I_2$.
Another interesting example is the group of non-zero quaternions $\H^*$.
Two quaternions are conjugated if and only if they have the same norm and trace. It easily follows that every quaternion of norm $1$
is conjugated to its inverse in $\H^*$. Yet the only involutions in the group $\H^*$ are $\pm 1$, and hence they are also the only bireflectional elements.

The systematic study of bireflectional elements in classical groups was initiated by an article of M.J. Wonenburger \cite{Wonenburger}, who
proved that, in the general linear group of a finite-dimensional vector space over a field, the bireflectional elements are the ones that are
similar to their inverse (she actually discarded fields of characteristic $2$, an unnecessary restriction that was later lifted by Djokovi\v c \cite{Djokovic} and Hoffman and Paige
\cite{HoffmanPaige}). All the known proofs of this result require a deep understanding of the similarity classes in algebras of endomorphisms, and rely on the Frobenius canonical form.

The present note originated in an attempt to give a purely algebraic proof of Wonenburger's result.
Let us be clearer: say that we have invertible elements $x,y$ of a finite-dimensional $\F$-algebra $\calA$
(associative, with unity) such that $yxy^{-1}=x^{-1}$. If possible, we want to compute an element $z \in \calA^\times$ such that $zxz^{-1}=x^{-1}$ and
$z^2=1_\calA$. Alas, the example of the $\R$-algebra of quaternions shows that this is not possible in general.

Thus, we naturally restrict our scope to algebraically closed fields, and from now on we assume that $\F$ is so. And we also assume that
$\F$ does not have characteristic $2$. In that situation, the proof is very simple. Let us come back to the elements $x$ and $y$, and note
that for every \emph{odd} integer $k$, we have $y^k x y^{-k}=x^{-1}$, leading to $y^kx=x^{-1} y^k$.
Hence $zx=x^{-1}z$ for all $z \in y\F[y^2]$, and to conclude it will suffice to prove that $y\F[y^2]$ contains an involutory element.
The proof is surprisingly simple and is based upon the following classical result:

\begin{lemma}\label{lemma:squareroot}
Let $\calA$ be a finite-dimensional algebra over an algebraically closed field of characteristic other than $2$, and $z$ be an invertible element of $\calA$.
Then $z$ has a square root in $\calA$.
\end{lemma}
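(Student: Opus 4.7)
The plan is to reduce to the commutative case and then apply the Chinese Remainder Theorem. First, I would note that since $\calA$ is finite-dimensional, the unital subalgebra $\F[z]$ is finite-dimensional. Since $z$ is invertible in $\calA$, its minimal polynomial $p(X)$ has nonzero constant term, and from this it follows that $z^{-1} \in \F[z]$. Thus it suffices to find a square root of $z$ inside $\F[z]$, and I may assume that $\calA = \F[z]$ is commutative.

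Next, using algebraic closure, I would factor $p(X)=\prod_i (X-\lambda_i)^{m_i}$ with pairwise distinct $\lambda_i \in \F^\times$. By the Chinese Remainder Theorem,
\[
\F[z] \;\simeq\; \prod_i \F[X]/(X-\lambda_i)^{m_i},
\]
which reduces the problem to finding a square root of the class of $X$ in each factor $\F[X]/(X-\lambda)^m$. There I would pick $\mu \in \F^\times$ with $\mu^2=\lambda$ and set $u := \lambda^{-1}(X-\lambda)$, which is nilpotent, so that the class of $X$ equals $\mu^2(1+u)$.

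The remaining task is to extract a square root of $1+u$ when $u$ is a nilpotent element of an $\F$-algebra with $\charac \F \neq 2$, which I would do by seeking $w=\sum_{k\ge 0} c_k u^k$ (a finite sum since $u^m=0$) with $w^2=1+u$. Matching coefficients yields $c_0 = 1$, $c_1 = \tfrac12$, and the recursion $c_k = -\tfrac12 \sum_{i=1}^{k-1} c_i c_{k-i}$ for $k \geq 2$, which is well-defined precisely because $2$ is invertible in $\F$. This step is the only place where the characteristic hypothesis is used, and it is the main (mild) obstacle; everything else is formal commutative algebra.
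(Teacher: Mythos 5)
Your proof is correct, and it takes a somewhat different route from the paper's. You split the commutative algebra $\F[z]\simeq \F[X]/(p)$ into its local factors via the Chinese Remainder Theorem and then extract a square root of $\mu^2(1+u)$ with $u$ nilpotent by solving for the coefficients of a truncated binomial series; the recursion $c_k=-\tfrac12\sum_{i=1}^{k-1}c_ic_{k-i}$ is exactly where $\charac\F\neq 2$ enters, as you say. The paper instead runs a Hensel-lifting induction entirely at the level of polynomials: it starts from an $r$ with $s\mid r^2-t$ (where $s$ is the squarefree part of an annihilating polynomial, with nonzero roots) and successively corrects $r$ by multiples of $s^k$ so that $s^{k+1}\mid r^2-t$, the division by $2r$ mod $s$ playing the role of your division by $2c_0$. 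The two arguments are cousins --- both are Hensel's lemma in disguise, lifting a square root from the semisimple quotient through the nilpotent part --- but yours makes the local structure explicit and produces a closed-form recursion, while the paper's avoids the CRT decomposition and stays uniform in the variable $t$, which makes it marginally shorter to write down. One small observation: your preliminary remark that $z^{-1}\in\F[z]$ is not needed as such; what you actually use is the consequence that the roots $\lambda_i$ of the minimal polynomial are all nonzero, which is what lets you choose $\mu\in\F^\times$ and form $\lambda^{-1}(X-\lambda)$.
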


For the sake of completeness, we prove this lemma.

\begin{proof}
This is a classical application of Hensel's method. The element $z$ is annihilated by a non-zero polynomial, and we can
even take such a polynomial of the form $s^n$ where $s=\prod_{k=1}^m (t-a_k) \in \F[t]$ is split with simple roots in $\F$, all of them nonzero.
For each $k \in \lcro 1,m\rcro$, we choose a square root $b_k$ of $a_k$, and we take $r \in \F[t]$ as an arbitrary polynomial such that
$r(a_k)=b_k$ for all $k \in \lcro 1,m\rcro$. Note that $s$ divides $r^2-t$.

Now, assume that for some $k \geq 1$ we have found a polynomial $r$ such that $s^k$ divides $r^2-t$. Write $r^2-t=s^k v$ mod $s^{k+1}$ for some $v \in \F[t]$.
Letting $u \in \F[t]$, we note that $(r+s^k u)^2-t=s^k v+2s^k ru$ mod $s^{k+1}$.
Noting that $r$ is invertible mod $s$ (because so is $t$), and using the assumption that $\F$ does not have characteristic $2$, we see that $u$ can be adjusted so that
$s$ divides $v+2ru$. Hence, by induction, for all $k \geq 1$ we can find $r_k \in \F[t]$ such that $s^k$ divides $r_k^2-t$. The case $k=n$ yields the claimed result.
\end{proof}

Applying this lemma in the algebra $\F[y^2]$, we obtain $y^{-2}=p(y^2)^2$ for some $p \in \F[t]$, and hence $(yp(y^2))^2=1_\calA$.
This yields the claimed result, and we conclude as follows:

\begin{theo}\label{theo:bireflectional}
Let $\calA$ be a finite-dimensional algebra over an algebraically closed field of characteristic other than $2$.
Then the bireflectional elements of the group $(\calA^\times,\times)$ are the elements that are conjugated to their inverse.
\end{theo}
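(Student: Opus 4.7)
The plan is to prove the nontrivial implication only, since the converse ``bireflectional implies conjugate to its inverse'' has already been settled in the opening paragraph of the section (if $g=ab$ with $a^2=b^2=1_\calA$, then $g^{-1}=bgb^{-1}$). So I take $x \in \calA^\times$ together with $y \in \calA^\times$ satisfying $yxy^{-1}=x^{-1}$, and I aim to build an involution $z \in \calA^\times$ that also satisfies $zxz^{-1}=x^{-1}$. Once such a $z$ is produced, $xz$ is automatically an involution since $(xz)^2=x(zxz)=xx^{-1}=1_\calA$, and the factorization $x=(xz)\cdot z$ exhibits $x$ as bireflectional.

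Following the suggestion made just before the lemma, I first observe that $y^2$ commutes with $x$, because $y^2 x y^{-2}=y\,x^{-1}\,y^{-1}=(yxy^{-1})^{-1}=x$. Consequently, for every $w$ in the $\F$-subspace $y\,\F[y^2]$ one has $wx=x^{-1}w$. It therefore suffices to exhibit $P \in \F[t]$ for which $z:=yP(y^2)$ is an involution. Because $P(y^2)$ commutes with $y$, this requirement simplifies to $y^2 P(y^2)^2 = 1_\calA$, or equivalently $P(y^2)^2 = y^{-2}$.

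At that stage I will apply Lemma \ref{lemma:squareroot} inside the finite-dimensional commutative $\F$-subalgebra $\F[y^2]$, to its invertible element $y^{-2}$. The one verification to perform is that $y^{-2}$ actually sits in $\F[y^2]$: this follows from the minimal polynomial of $y^2$, whose constant term is nonzero since $y^2 \in \calA^\times$, which rearranges into a polynomial expression of $y^{-2}$ in $y^2$. Once this point is cleared, the lemma immediately produces the desired $P$ and closes the proof. There is no genuine obstacle: all the real work has been done by Lemma \ref{lemma:squareroot} and by the observations recorded in the preceding paragraphs of the section.
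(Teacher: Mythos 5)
Your proposal is correct and follows essentially the same route as the paper: you reduce to finding an involution in $y\,\F[y^2]$ and obtain it by extracting a square root of $y^{-2}$ inside $\F[y^2]$ via Lemma~\ref{lemma:squareroot}, exactly as in the discussion preceding the theorem (your remark that $y^2$ commutes with $x$ is the same observation as the paper's statement that $y^k x y^{-k}=x^{-1}$ for odd $k$). The only added detail is your explicit check that $y^{-2}\in\F[y^2]$, which the paper leaves implicit.
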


\section{Sums of square-zero elements in an algebra}

The previous method can be applied in different situations as well. In \cite{WuWang},
Wang and Wu examined the endomorphisms of a finite-dimensional complex vector space that split into the sum of two square-zero endomorphisms.
Their result was generalized by Botha \cite{Botha} to all fields. One of their key results is that
every endomorphism that is conjugated to its opposite is the sum of two square-zero endomorphisms. We generalize this result as follows:

\begin{theo}\label{theo:squarezero}
Let $\calA$ be a finite-dimensional algebra over an algebraically closed field $\F$ of characteristic other than $2$.
Let $x \in \calA$ be conjugated to $-x$. Then $x$ is the sum of two square-zero elements of $\calA$.
\end{theo}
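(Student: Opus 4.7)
The plan is to imitate the method used for Theorem \ref{theo:bireflectional}. Write $yxy^{-1}=-x$ for some $y\in\calA^\times$. The key observation is that for every odd integer $k$ we have $y^k x y^{-k}=(-1)^k x=-x$, so every element $z$ of the subset $y\,\F[y^2]$ of $\calA$ satisfies $zx=-xz$. It will therefore suffice to find an involution $z$ inside $y\,\F[y^2]$, after which the decomposition of $x$ into two square-zero summands will follow from a short direct computation.

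To produce such an involution, I would apply Lemma \ref{lemma:squareroot} inside the finite-dimensional commutative $\F$-algebra $\F[y^2]$. Since $y^2$ is invertible in $\calA$, its inverse $y^{-2}$ already lies in $\F[y^2]$ (one reads it off from the constant term of the minimal polynomial of $y^2$), so the lemma supplies a polynomial $p\in\F[t]$ with $p(y^2)^2=y^{-2}$. Setting $z:=y\,p(y^2)$ then yields $z^2=y^2\,p(y^2)^2=1_\calA$, while $z\in y\,\F[y^2]$ ensures $zx=-xz$.

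It only remains to decompose $x$ itself. Since $z^2=1_\calA$ and $\charac\F\neq 2$, the elements $e:=\frac{1+z}{2}$ and $f:=\frac{1-z}{2}$ form a pair of orthogonal idempotents summing to $1_\calA$, and the anti-commutation $zx=-xz$ translates into $xe=fx$ and $xf=ex$. Writing $x=ex+fx$, one then computes
\[
(ex)^2 = e\,(xe)\,x = e\,f\,x^2 = 0 \qquad\text{and}\qquad (fx)^2 = f\,(xf)\,x = f\,e\,x^2 = 0,
\]
exhibiting $x$ as the sum of the two square-zero elements $ex$ and $fx$. The one genuinely creative step in the sketch is realising that the problem reduces, exactly as for Theorem \ref{theo:bireflectional}, to finding an involution in $y\,\F[y^2]$; once this parallel is noticed, both the production of $z$ and the final decomposition are routine three-line computations.
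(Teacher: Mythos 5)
Your proposal is correct and follows essentially the same route as the paper: the same reduction to finding an involution $z$ in $y\,\F[y^2]$ via Lemma \ref{lemma:squareroot}, followed by splitting $x$ using the idempotents $\frac{1}{2}(1\pm z)$. Even your final decomposition agrees with the paper's ($x=exf+fxe$ there), since the anti-commutation forces $exe=fxf=0$ and hence $ex=exf$ and $fx=fxe$.
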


\begin{proof}
Starting from an invertible $y \in \calA$ such that $yxy^{-1}=-x$, we find that $y^k x=-x y^k$ for every odd integer $k$.
Proceeding as before, we deduce that there is an involution $i \in \calA$ such that $ix=-xi$, that is $ixi=-x$.
Setting $p:=\frac{1}{2}(1_\calA+i)$ and $q:=\frac{1}{2}(1_\calA-i)$, we have $p+q=1_\calA$, $i=p-q$ and $pq=qp=0$. The equality $ixi=-x$
then leads to $x=(pxq+qxp)-(pxp+qxq)$, whereas $x=1_\calA x 1_\calA$ leads to $x=(pxq+qxp)+(pxp+qxq)$.
Taking the half sum yields $x=a+b$ with $a:=pxq$ and $b:=qxp$, which are obviously of square zero.
\end{proof}

In Theorem \ref{theo:squarezero}, the converse statement holds true for invertible elements in any ring $R$.
Indeed, let $x$ be an invertible element of a ring $R$, and $a,b$ be square-zero elements of $R$ such that $x=a+b$.
Then for $y:=ab-ba$ we see that $y^2=(ab)^2+(ba)^2=x^4$ and hence $y$ is invertible. Besides $yx=aba-bab=-xy$.
The invertibility of $x$ is unavoidable, even in finite-dimensional algebras over algebraically closed fields. Consider indeed the
algebra $\calA:=\C I_3+\mathrm{NT}_3(\C)$ of all upper-triangular $3 \times 3$ matrices with complex entries and equal diagonal entries.
We have the following decomposition
$$\underbrace{\begin{bmatrix}
0 & 1 & 0 \\
0 & 0 & 1 \\
0 & 0 & 0
\end{bmatrix}}_J=\underbrace{\begin{bmatrix}
0 & 1 & 0 \\
0 & 0 & 0 \\
0 & 0 & 0
\end{bmatrix}}_A+\underbrace{\begin{bmatrix}
0 & 0 & 0 \\
0 & 0 & 1 \\
0 & 0 & 0
\end{bmatrix}}_B$$
where $A^2=B^2=0$. Yet $J$ is not conjugated to $-J$ in $\calA$, for the conjugate of any matrix $M$ of $\calA$ must have the same entries
as $M$ on the first super-diagonal.

\section{Bireflectional elements in an algebra with involution}

We finish by coming back to the initial problem of bireflectional elements, this time in the context of an $\F$-algebra $\calA$ equipped
with an $\F$-linear involution $x \mapsto x^\star$. It is natural to ask
what becomes of Theorem \ref{theo:bireflectional} when the setting is the unitary group
$$\mathcal{U}(\calA):=\{x \in \calA : \; x^\star x=1_\calA\}.$$
However, it is known that the result fails with this level of generality, e.g.\ when $\calA$ is the algebra of all endomorphisms of a complex vector space with finite dimension $2n>0$
equipped with a symplectic form $s$, with respect to which the involution is the standard adjunction. In that case, it is known that every unitary element of the symplectic group $\Sp(s)$
is conjugated to its inverse, but there are elements of $\Sp(s)$ that are not bireflectional (this is trivial if $n=1$ because in that case the only involutions in $\Sp(s)$ are $\pm \id$).
Here is the best general result we could come up with:

\begin{prop}
Let $(\calA,x \mapsto x^\star)$ be a finite-dimensional algebra with linear involution over an algebraically closed field $\F$ with characteristic other than $2$.
Let $x \in \calU(\calA)$ be conjugated to its inverse in $\mathcal{A}$. Then there exists $y \in \calU(\calA)$ such that $yxy^{-1}=x^{-1}$ and $y^4=1_\calA$.
\end{prop}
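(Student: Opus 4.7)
The plan is to follow the strategy of Theorem \ref{theo:bireflectional} in two stages: first upgrade any invertible conjugator to a unitary one via a ``polar decomposition'' trick, then perturb it inside a commutative $\star$-subalgebra to achieve $y^{4}=1_\calA$.

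For the first stage I would start from an invertible $z \in \calA$ with $zxz^{-1}=x^{-1}$ and use the identity $x^\star=x^{-1}$ to show, by applying $\star$ to this relation, that $z^\star$ also conjugates $x$ to $x^{-1}$. Consequently $w := z^\star z$ is self-adjoint and commutes with $x$. Applying Lemma \ref{lemma:squareroot} inside the finite-dimensional commutative subalgebra $\F[w]$ yields a square root $s \in \F[w]$ of $w$; since the involution is $\F$-linear and $w^\star=w$, the element $s$ is self-adjoint, and it commutes with $x$ as it lies in $\F[w]$. Then $u:=zs^{-1}$ satisfies $u^\star u = s^{-1} w s^{-1} = 1_\calA$ and $u x u^{-1}=zxz^{-1}=x^{-1}$, so $u \in \calU(\calA)$ conjugates $x$ to its inverse.

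For the second stage I would seek $y$ of the form $y = uc$ with $c \in \F[u^2]$. Since $\F[u^2]$ is contained in the centraliser of $x$ (because $u^2$ centralises $x$) and $c$ commutes with $u$, such a $y$ automatically satisfies $yxy^{-1}=x^{-1}$ and $y^2 = u^2 c^2$. The crucial observation is that it suffices to produce a self-adjoint involution $\zeta \in \F[u^2]$ together with a unitary $c \in \F[u^2]$ with $c^2 = \zeta u^{-2}$: one then gets $y^2 = \zeta$, hence $y^4 = \zeta^2 = 1_\calA$, and $y^\star y = c^\star c = 1_\calA$. To build $(\zeta, c)$ I would decompose the commutative $\star$-algebra $\F[u^2]$ into its local Artinian factors $\prod_i R_i$; the involution permutes the $R_i$ in orbits of length $1$ or $2$, and on each $\star$-fixed $R_i$ the residue of $u^2$ must equal its inverse and hence belongs to $\{1,-1\}$. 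Taking $\zeta_i$ equal to the residue of $u^2$ on each $\star$-fixed $R_i$, and making any constant-on-orbits sign choice on the two-element orbits, makes $\zeta$ a self-adjoint involution such that $\zeta u^{-2}$ has a square residue on every factor; Lemma \ref{lemma:squareroot} then yields some $c_0 \in \F[u^2]$ with $c_0^2 = \zeta u^{-2}$. A residue-level computation shows that the self-adjoint involution $c_0^\star c_0 \in \F[u^2]$ equals $1_\calA$ on every $\star$-fixed factor, so the remaining obstruction is killed by multiplying $c_0$ by a sign-involution $\eta \in \F[u^2]$ with $\eta^\star \eta = c_0^\star c_0$, whose existence reduces to a trivial sign-choice on each two-element orbit.

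The main difficulty is the choice of $\zeta$ on the $\star$-fixed factors: the naive attempt $\zeta = 1_\calA$ fails whenever $u^2$ has residue $-1$ on some $\star$-fixed factor, because then any square root of $u^{-2}$ has residue $\pm i$ there and cannot be unitary. Absorbing this residual sign into $\zeta$ is precisely what forces the weakening from $y^2 = 1_\calA$ to the conclusion $y^4 = 1_\calA$ of the statement.
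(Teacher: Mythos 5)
Your argument is correct. The first stage is exactly the paper's polar-decomposition step: $z^\star z$ commutes with $x$, a selfadjoint square root $s\in\F[z^\star z]$ exists by Lemma \ref{lemma:squareroot}, and $u:=zs^{-1}$ is a unitary conjugator. For the second stage, however, you take a genuinely different route. The paper stays coarse: it splits $\F[a]$, $a=(z-z^{-1})^2$, by the Fitting idempotent into the block where $z-z^{-1}$ is invertible and the block where it is nilpotent, and on each block writes down an explicit element of $z\F[z^2]$ --- namely $i(z-z^{-1})\,p((z+z^{-1})^2)$, skew-selfadjoint with square $-1$, respectively $(z+z^{-1})\,q((z+z^{-1})^2)$, selfadjoint with square $1$ --- then adds the two pieces to get $y$ with $y^4=1_\calA$. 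You instead refine all the way down to the local factors of the $\star$-stable commutative algebra $\F[u^2]$, track the action of the involution on the primitive idempotents, and correct $u$ by a unitary $c\in\F[u^2]$ built from a square root of $\zeta u^{-2}$ plus a sign involution $\eta$ on the two-element orbits; I checked the key computations ($\rho_i(r^\star)=\rho_i(r)$ on $\star$-fixed factors, hence the residue of $u^2$ there is $\pm1$; $(c_0^\star c_0)^2=1_\calA$ with trivial component on fixed factors; the solvability of $\eta^\star\eta=c_0^\star c_0$ with $\eta^2=1_\calA$) and they all go through. What the paper's version buys is brevity: the dichotomy ``residue of $z^2$ equal to $1$ versus different from $1$'' needs only one idempotent, and taking $u$ globally skew-selfadjoint with $u^2=-1_\calA$ on the invertible block sidesteps your orbit-by-orbit choices of $\zeta$ and $\eta$. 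What your version buys is an explanation: your closing observation that the $\star$-fixed local factors on which $u^2$ has residue $-1$ are precisely the obstruction to achieving $y^2=1_\calA$ localises why the conclusion must be weakened to $y^4=1_\calA$, something the paper only illustrates through the symplectic-group counterexample. Both proofs ultimately rest on the same two pillars: Lemma \ref{lemma:squareroot} applied inside a commutative $\star$-stable subalgebra, and the fact that every invertible element of $u\F[u^2]$ conjugates $x$ to $x^{-1}$.
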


\begin{proof}
The first step consists in noting that $x$ is conjugated to its inverse in the group $\mathcal{U}(\mathcal{A})$.
This is the classical part that uses the polar decomposition trick, and we recall it for the sake of completeness.
Start with $z \in \calA^\times$ such that $zxz^{-1}=x^{-1}$. Taking the adjoint, we also find $z^\star x^{-1} (z^\star)^{-1}=x$
and we deduce that $z^\star z$ commutes with $x$. Taking a square root $s$ of $z^\star z$ in $\F[z^\star z]$, we still have that $s$ commutes
with $x$ and now $z':=zs^{-1}$ is unitary and satisfies $z'x(z')^{-1}=x^{-1}$.

So, we can assume that $z \in \mathcal{U}(\mathcal{A})$. Next, there are two special situations to consider.
\begin{itemize}
\item \textbf{Special case 1: $z-z^{-1}$ is invertible.} Note that $(z-z^{-1})^2$ is invertible and that it belongs to $\F[(z+z^{-1})^2]$.
By Lemma \ref{lemma:squareroot}, we can find $p \in \F[t]$ such that $(z-z^{-1})^{-2}=p((z+z^{-1})^2)^2$.
Set $u:=i (z-z^{-1}) p((z+z^{-1})^2)$, where $i$ is an element of $\F$ such that $i^2=-1$.
Since $z^2$ is invertible and $\mathcal{A}$ is finite-dimensional, we have $\F[z^2]=\F[z^2,z^{-2}]$.
Therefore it is clear that $u \in z \F[z^2]$, to the effect that $uxu^{-1}=x^{-1}$. Finally, $u$ is skewselfadjoint and satisfies $u^2=-1_\calA$, whence $u \in \mathcal{U}(\calA)$.
\item \textbf{Special case 2: $z-z^{-1}$ is nilpotent.} Then $z+z^{-1}$ is invertible because
$(z+z^{-1})^2=4.1_\calA+(z-z^{-1})^2$. This time around, we find $q \in \F[t]$ such that $(z+z^{-1})^{-2}=q((z+z^{-1})^2)^2$,
and we set $u:=(z+z^{-1}) q((z+z^{-1})^2)$.
Clearly  $u \in z \F[z^2]$, whence $uxu^{-1}=x^{-1}$. Finally, $u$ is selfadjoint and satisfies $u^2=1_\calA$, whence $u \in \mathcal{U}(\calA)$.
\end{itemize}
To obtain the general case, we can simply use the Fitting decomposition of the selfadjoint element $a:=(z-z^{-1})^2$. This yields an idempotent $p \in \F[a]$
such that $pap$ is invertible in $p\calA p$ and $p'ap'$ (where $p':=1_\calA-p)$ is nilpotent in $p'\calA p'$.
Noting that $p$ is selfadjoint, we get that $p\calA p$ and $p'\calA p'$ are stable under the involution of $\calA$ under consideration.
Besides, we note that $(z-z^{-1})^2$ commutes with both $x$ and $z$, and hence $p$ commutes with both $x$ and $z$.
We deduce that $pzp$ conjugates $pxp$ into its inverse in $p \calA p$, and $p'zp'$ conjugates $p'xp'$ into its inverse in $p' \calA p'$.

Using the previous two special cases, we obtain a unitary $z_1 \in p\calA p$ such that $z_1$ conjugates $pxp$ into its inverse in $p \calA p$
and $(z_1)^2=-1_{p \calA p}$, and a unitary $z_2 \in p'\calA p'$ such that $z_2$ conjugates $p'xp'$ into its inverse in $p' \calA p'$
and $(z_2)^2=1_{p' \calA p'}$. Hence, $y:=z_1+z_2$ is unitary in $\calA$, and we have the algebraic relations $y xy^{-1}=x^{-1}$ and $y^4=1_\calA$.
\end{proof}

Remark that with the previous proof we can conclude that every element of $\calU(\calA)$ that is conjugated to its inverse splits into a product $yz$ with
$y,z$ in $\mathcal{U}(\calA)$ such that $y^4=z^4=1_\calA$.

Finally, note that if the algebra $\calA$ equals $\End(V)$ for some finite-dimensional vector space $V$ over $\F$, then
every unitary element of $\calA$ is conjugated to its adjoint, and hence to its inverse, inside of $\calA$.
This is a classical consequence of the Skolem-Noether theorem on automorphisms of central simple algebras, together with the fact that every endomorphism is similar to its transpose.

\end{document}